\pgfplotsset{compat=1.15}
\theoremstyle{theorem}
\newtheorem{theorem}{Theorem}
\newtheorem*{theorem*}{Theorem}
\newtheorem{proposition}{Proposition}
\newtheorem*{proposition*}{Proposition}
\newtheorem*{lemma*}{Lemma}
\theoremstyle{definition}
\newtheorem{remarkn}{Remark}
\newtheorem*{remark}{Remark}
\begin{document}

\title
{The surface tangent paradox and the difference vector quotient of a secant plane}

\author
{Paolo Roselli\footnote{
Università di Roma ``Tor Vergata'', 
Dipartimento di Matematica, 
via delle Ricerca Scientifica 1, 
00133 Roma, Italy. Email: roselli@mat.uniroma2.it}
}

\maketitle

\begin{abstract}
If a one-variable function is sufficiently smooth, then the limit position of secant lines its graph is a tangent line. By analogy, one would expect that the limit position of secant planes of a two-variable smooth function is a plane tangent to its graph. Amazingly, this is not necessarily true, even when the function is a simple polynomial. Despite this paradox, we show that some analogies with the one-variable case still hold in the multi-variable context, provided we use a particular vector product: the Clifford's geometric one.
\end{abstract}

\section{Introduction.}
The classical Schwarz surface area paradox\footnote{See \cite{Schwarz1890}, \cite{Peano1890}, and \cite{Zames1977}.} provides a counterexample concerning the definition of the area of a curved surface. It is a divergence phenomenon, that is  seldom presented, even at the end of an Advanced Calculus course\footnote{See \cite{FVB73} at pages 142--144.}.
Here, we show that a local version of this paradox deals with the very definition of the tangent to the graph of a smooth function. For simplicity, we only consider functions of one and two real variables. 

Let us first recall the link between lines secant and tangent the graph of a one-variable smooth function $g: I \subseteq \mathbb{R}\to\mathbb{R}$.  
If you consider the line passing through two distinct points $\big(a,g(a)\big)$, $\big(b,g(b)\big)\in I\times\mathbb{R}$ of the graph of~$g$,  then this secant line becomes the line tangent the graph of $g$ at point $\big(x_0,g(x_0)\big)$, as the distinct points~$a$ and~$b$ converge towards $x_0$, whatever is the way they do it, provided~$g$ is sufficiently smooth \big(a $C^1$-function, let's say\big). 
Analytically, this corresponds to the existence of the strong derivative\footnote{See \cite{Peano1892}, and \cite{Esser}.} of g at point $x_0$, which coincides with the classical derivative~$g'(x_0)$, provided the strong derivative exists (as it happens for a $C^1$-function). More precisely,
\begin{equation}
\label{eq:strong der one-var}
\lim_{
\arraycolsep=1.4pt\def\arraystretch{0.3}
\begin{array}{c}
\scriptstyle a,b\to x_0	\\
\scriptstyle a\ne b
\end{array}
}
\frac{g(b)-g(a)}{b-a}
=
g'(x_0)
\end{equation}
By analogy, one would expect that, for a two-variable smooth function, the plane tangent to its graph at a fixed point would be the limit position of the secant plane passing through three non-collinear points, as these three non-collinear points converge to that fixed point. 
Paradoxically, this is not the case.

\subsection{Counterexample.}
Let us consider as two-variable smooth function~$f$ the polynomial $f(\mathbf{v}) = \mathbf{v}\cdot \mathbf{v}= x^2+y^2$, when $\mathbf{v} = x \mathbf{e}_1 + y \mathbf{e}_2$, and $\{\mathbf{e}_1, \mathbf{e}_2\}$ is an orthonormal basis of the two-dimensional vector\footnote{Vectors, also called \textit{points}, will be indicated by \textbf{bold} Latin letters; real numbers, also called \textit{scalars}, will be denoted by lower-case Latin or Greek letters.} Euclidean space $\mathbb{E}_2$, with scalar product $\mathbf{u}\cdot\mathbf{w}$, with $\mathbf{u}, \mathbf{w}$ vectors of $\mathbb{E}_2$. So, $\mathbf{e}_1\cdot \mathbf{e}_2=0$, and $\mathbf{e}_1\cdot \mathbf{e}_1=\mathbf{e}_2\cdot \mathbf{e}_2=1$. If $\mathbf{a} = \mathbf{0}$, $\mathbf{b} = -\delta \mathbf{e}_1 + \eta \mathbf{e}_2$, 	and $\mathbf{c} = \delta \mathbf{e}_1 + \eta \mathbf{e}_2$, then the Cartesian equation of the plane passing through points $\big(\mathbf{a},f(\mathbf{a})\big)$, $\big(\mathbf{b},f(\mathbf{b})\big)$, and $\big(\mathbf{c},f(\mathbf{c})\big)$ can be represented, in the $(x,y,z)$-coordinates, by the relation
\begin{equation}
\label{eq:secant plane 1}
\eta z = (\delta^2 + \eta^2) y 
\end{equation}
As the limit of $\displaystyle \frac{\delta^2 + \eta^2}{\eta}$, as $\delta,\eta\to 0$, does not exist, then the secant plane~(\ref{eq:secant plane 1}) does not necessarily converge to the  plane $z=0$, which is tangent to  the graph of $f$ at point~$\mathbf{0}$. 

This counterexample illustrates the \textit{\textbf{tangent paradox}}:
a plane secant the graph of a smooth two-variable function~$f$ at points $\big(\mathbf{a},f(\mathbf{a})\big)$, $\big(\mathbf{b},f(\mathbf{b})\big)$, and $\big(\mathbf{c},f(\mathbf{c})\big)$ can assume limit positions that depend on the way the three non-collinear points $\mathbf{a}$, $\mathbf{b}$ and $\mathbf{c}$ converge to $\mathbf{x}_0$.
For example, if you consider $f$, $\mathbf{a}$, $\mathbf{b}$ and $\mathbf{c}$ as in the counterexample,
\begin{itemize}
	\item if $\eta=\delta \to 0$, then the limit plane would be $z=0$, the tangent plane;
	\item if $\eta=\delta^2 \to 0$, then the limit plane would be $z=y$, which is not the tangent plane;
	\item if $\eta=\delta^3 \to 0$, then the limit plane would be $y=0$, which is even orthogonal to the tangent plane!
\end{itemize}

So, contrary to the one-variable case, we cannot simply say that the tangent plane to a smooth two-variable function is the limit position of its secant planes.

\subsection{In search for analogies.} 
Despite such paradox, one can ask if any analogy with the one-variable case still holds for two-variable smooth functions. 

\noindent
In the $(x,z)$-plane, the Cartesian equation of the line passing through $\big(a,g(a)\big)$, $\big(b,g(b)\big)$ (with $a\ne b$) is
\begin{equation}
\label{eq:secant one-var}
z=g(a) +
\frac{g(b)-g(a)}{b-a}
(x-a)
\end{equation}
The existence of the strong derivative~(\ref{eq:strong der one-var}), and the continuity of~$g$ at~$x_0$ imply that, as $a,b\to x_0$, relation~(\ref{eq:secant one-var}) becomes relation
\begin{center}
$
z = g(x_0) + g'(x_0) (x-x_0)\ ,
$
\end{center}
which represents the line tangent the graph of $g$ at point $\big(x_0,g(x_0)\big)$. The relation representing the plane tangent the graph of the two-variable function~$f:\Omega\subseteq\mathbb{E}_2\to\mathbb{R}$ at point~$\big(\mathbf{v}_0,f(\mathbf{v}_0)\big)$ can be written in the $(\mathbf{v},z)$-space $\mathbb{E}_2\times\mathbb{R}$ as
\begin{center}
$
z = f(\mathbf{v}_0) + \nabla f(\mathbf{v}_0) \cdot (\mathbf{v}-\mathbf{v}_0)\ .
$
\end{center}
So, we wonder if, in analogy with~(\ref{eq:secant one-var}), there exists a  relation 
\begin{equation}
\label{eq:equiv two dim}
z=f(\mathbf{a}) + \mathbf{q} \cdot (\mathbf{v}-\mathbf{a})
\end{equation}
representing the plane passing through $\big(\mathbf{a},f(\mathbf{a})\big)$, $\big(\mathbf{b},f(\mathbf{b})\big)$, and $\big(\mathbf{c},f(\mathbf{c})\big)$, with vector~$\mathbf{q}=\mathbf{q}_{(f,\mathbf{a},\mathbf{b},\mathbf{c})}$ depending on $f$, $\mathbf{a}$, $\mathbf{b}$ and $\mathbf{c}$, as the classical difference scalar quotient~$q=q_{(g,a,b)}=\frac{g(b)-g(a)}{b-a}$ in~(\ref{eq:secant one-var}) depends on $g$, $a$, and $b$.

In this work we show\footnote{See Theorem~\ref{thm:main} at the end of this article.} that vector $\mathbf{q}_{(f,\mathbf{a},\mathbf{b},\mathbf{c})}$ can 
still be expressed as a difference quotient, provided the corresponding vector product is the Clifford's geometric vector product (\textit{\textbf{geometric product}}, for short). In order to show this, we will explain, in the next Section, what the geometric product is, at least between vectors in the two-dimensional Euclidean space $\mathbb{E}_2$. Here, we anticipate a consequence\footnote{See Theorem~\ref{thm:lin comb orth}.} of Theorem 1, that allows to express the vector quotient $\mathbf{q}$ as a fully symmetric linear combination of the outward  normals of the oriented triangle $T$ determined by points $\mathbf{a}$, $\mathbf{b}$, and $\mathbf{c}$:
\begin{equation}
\label{eq:lin comb orth}
\mathbf{q}
=
\mathbf{q}_{(f,\mathbf{a},\mathbf{b},\mathbf{c})}
=
\frac{f(\mathbf{a})+f(\mathbf{b})}{2\tau}\mathbf{\partial_c^\bot}
+
\frac{f(\mathbf{a})+f(\mathbf{c})}{2\tau}\mathbf{\partial_b^\bot}
+
\frac{f(\mathbf{b})+f(\mathbf{c})}{2\tau}\mathbf{\partial_a^\bot}
\end{equation}
where $\mathbf{\partial_c}=\mathbf{b}-\mathbf{a}$, $\mathbf{\partial_b}=\mathbf{a}-\mathbf{c}$, and $\mathbf{\partial_a}=\mathbf{c}-\mathbf{b}$, and $\tau$ is the 
area of the triangle $T$, as illustrated by the figure below
\begin{center}
\definecolor{ududff}{rgb}{0,0,0}
\begin{tikzpicture}[line cap=round,line join=round,>=triangle 45,x=0.9cm,y=0.9cm]
\clip(-3.8,-2.2) rectangle (2.8,2.73);
\draw [->,line width=1.pt,-stealth] (-0.6078038893645279,-1.232875438994985) -- (1.3211531808571966,0.9337067775584009);
\draw [->,line width=1.pt,-stealth] (1.3211531808571966,0.9337067775584007) -- (0.594299792077996,2.373435605332586);
\draw [->,line width=1.pt,-stealth] (0.594299792077996,2.373435605332586) -- (-0.6078038893645279,-1.2328754389949848);
\draw [->,line width=1.pt,dotted,-stealth] (-0.006752048643265951,0.5702800831688005) -- (-3.6130630929708367,1.7723837646113245);
\draw [->,line width=1.pt,dotted,-stealth] (0.9577264864675963,1.6535711914454934) -- (2.397455314241782,2.3804245802246937);
\draw [->,line width=1.pt,dotted,-stealth] (0.35667464574633434,-0.14958433071829214) -- (2.5232568622997205,-2.0785414009400163);
\begin{scriptsize}
\draw [fill=ududff] (-0.6078038893645279,-1.232875438994985) circle (1.5pt);
\draw[color=ududff] (-0.649737738717174,-1.45) node {$\mathbf{a}$};
\draw [fill=ududff] (0.594299792077996,2.373435605332586) circle (1.5pt);
\draw[color=ududff] (0.5663438925095653,2.6) node {$\mathbf{c}$};
\draw [fill=ududff] (1.3211531808571966,0.9337067775584007) circle (1.5pt);
\draw[color=ududff] (1.55,0.8987619030978622) node {$\mathbf{b}$};
\draw[color=black] (1.25,0.5) node {$\mathbf{\partial_c}$};
\draw[color=black] (1.,2.1) node {$\mathbf{\partial_a}$};
\draw[color=black] (-0.6,-0.4) node {$\mathbf{\partial_b}$};
\draw[color=black] (-2.7604081561336984,1.7933506892876472) node {$\mathbf{\partial_b^\bot}$};
\draw[color=black] (1.9,1.85) node {$\mathbf{\partial_a^\bot}$};
\draw[color=black] (1.8,-1.7430706061188466) node {$\mathbf{\partial_c^\bot}$};
\end{scriptsize}
\end{tikzpicture}
\end{center}

\begin{remark}
The vector quotient $\mathbf{q}$ is fully symmetric in the following sense:
\begin{center}
$
\mathbf{q}_{(f,\mathbf{v}_{\sigma(1)},\mathbf{v}_{\sigma(2)},\mathbf{v}_{\sigma(3)})}
=
\mathbf{q}_{(f,\mathbf{v}_1,\mathbf{v}_2,\mathbf{v}_3)}
$
for each $\sigma\in\mathcal{S}_3$,
\end{center}
where $\mathcal{S}_3$ is the symmetric group of all permutations of the set $\{1, 2, 3\}$.
\end{remark}

\begin{remark}
The Schwarz tangent paradox tells us that, despite the limit~(\ref{eq:strong der one-var}) exists when~$g\in C^1(I)$, 
\begin{center}
$\displaystyle
\lim_{
\arraycolsep=1.4pt\def\arraystretch{0.3}
\begin{array}{c}
\scriptstyle \mathbf{a},\mathbf{b},\mathbf{c}\to \mathbf{x}_0	\\
\scriptstyle \mathbf{a},\mathbf{b},\mathbf{c} \textrm{ not collinear}	
\end{array}
}
\kern-10pt
\mathbf{q}_{(f,\mathbf{a},\mathbf{b},\mathbf{c})}
\ \ \textrm{ doesn't exist,}
$
\end{center}
even when the non-linear two-variable function~$f$ is~$C^\infty(\mathbb{E}_2)$, as in the foregoing counterexample.
\end{remark}

\section{The geometric product of two vectors in $\mathbb{E}_2$.}

There are many ways to define the geometric product in an arbitrary quadratic space\footnote{See \cite{Riesz1993}, \cite{Porteous1981}, \cite{Choquet-Bruhat-DeWitt-Morette}, \cite{Guerlebeck-Sproessig}, \cite{Dorst}, \cite{Macdonald}, or \cite{Vaz-Rocha}, for instance.}. Here, for simplicity, we limit ourselves to present the geometric product between vectors of a two-dimensional Euclidean space $\mathbb{E}_2$, having $\mathbf{u}\cdot\mathbf{v}$ as positive definite symmetric bilinear form, and $|\mathbf{v}|=\sqrt{\mathbf{v}\cdot\mathbf{v}}$ as norm (where $\mathbf{u},\mathbf{v}$ are vectors of $\mathbb{E}_2$). 

\subsection{Axioms.}
we denote by $\mathbb{G}_2$ the associative algebra  of polynomials of vectors of $\mathbb{E}_2$ (poly-vectors, for short) satisfying the following rules: 

\begin{enumerate}
	\item[(A1)] 	scalars are considered as poly-vectors of degree $0$, and $\mathbf{v}^0=1$; non-zero vectors of $\mathbb{E}_2$ are considered as poly-vectors of degree 1;
	\item[(A4)] addition in $\mathbb{R}$, addition in $\mathbb{E}_2$, and addition in $\mathbb{G}_2$ they all coincide\footnote{This implies that the zero scalar coincides with the null vector in $\mathbb{G}_2$. That is, $0=\mathbf{0}$.}; multiplication in $\mathbb{R}$, multiplication of a scalar and a vector in $\mathbb{E}_2$, and multiplication of poly-vectors in $\mathbb{G}_2$ they all coincide\footnote{As a consequence of this axiom, it is natural to denote the geometric product by juxtaposition.};
	\item[(A3)]   scalars commute with vectors, that is 
\begin{center}
$\alpha \mathbf{v} = \mathbf{v} \alpha$
\end{center}
	  for each scalar $\alpha \in\mathbb{R}$, and vector $\mathbf{v}\in\mathbb{E}_2$;
	\item[(A5)] scalars and non-zero vectors of $\mathbb{E}_2$ are  linearly independent\footnote{That is, $\alpha+\beta\mathbf{v}=0$ if and only if $\alpha=\beta=0$ \big(where $\alpha,\beta\in\mathbb{R}$, and $\mathbf{0}\ne\mathbf{v}\in\mathbb{E}_2$\big).} in~$\mathbb{G}_2$.
	\item[(A6)] the Euclidean quadratic form~$|\mathbf{v}|^2=\mathbf{v}\cdot\mathbf{v}$ in $\mathbb{E}_2$ is the geometric product of the vector~$\mathbf{v}\in\mathbb{E}_2$ with itself, that is
\[
|\mathbf{v}|^2 = \mathbf{v}\mathbf{v} = \mathbf{v}^2\ .
\]
\end{enumerate}
Elements of $\mathbb{G}_2$ are called \textit{multivectors}. 

\subsection{Basic properties.}
A first consequence of the foregoing axioms concerns invertibility of non-zero vectors.
\begin{proposition*}
Every non-zero vector $\mathbf{v}\in\mathbb{E}_2$ is invertible with respect to the geometric product, and $\displaystyle \mathbf{v}^{-1} = \frac{1}{|\mathbf{v}|^2}\mathbf{v}$ is still a vector. 
\end{proposition*}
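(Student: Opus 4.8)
The plan is to verify directly that the proposed element $\frac{1}{|\mathbf{v}|^2}\mathbf{v}$ is a genuine two-sided inverse of $\mathbf{v}$, relying only on the stated axioms. First I would observe that, since $\mathbf{v}\neq\mathbf{0}$ and the scalar product is positive definite, the scalar $|\mathbf{v}|^2=\mathbf{v}\cdot\mathbf{v}$ is strictly positive, so its reciprocal $\frac{1}{|\mathbf{v}|^2}$ is a well-defined nonzero real number. By axiom (A4) the product of this scalar with the vector $\mathbf{v}$ coincides with the usual scalar multiplication in $\mathbb{E}_2$, whose result is again a vector of $\mathbb{E}_2$; hence the candidate $\mathbf{v}^{-1}=\frac{1}{|\mathbf{v}|^2}\mathbf{v}$ is itself a vector, which already settles the ``still a vector'' part of the claim.

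Next I would compute the two products $\mathbf{v}\,\mathbf{v}^{-1}$ and $\mathbf{v}^{-1}\,\mathbf{v}$. Using the commutativity of scalars with vectors (A3), together with the associativity built into the definition of $\mathbb{G}_2$ as an associative algebra (A4), I can move the scalar $\frac{1}{|\mathbf{v}|^2}$ to the front of each product, reducing both of them to $\frac{1}{|\mathbf{v}|^2}(\mathbf{v}\mathbf{v})$. Axiom (A6), which identifies $\mathbf{v}\mathbf{v}=\mathbf{v}^2$ with $|\mathbf{v}|^2$, then turns this into $\frac{1}{|\mathbf{v}|^2}\,|\mathbf{v}|^2=1$. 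Thus both one-sided products equal the multiplicative identity, and $\mathbf{v}^{-1}$ is the two-sided inverse of $\mathbf{v}$ with respect to the geometric product.

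There is essentially no difficult step here; the single point that requires care is the appeal to positive-definiteness of the quadratic form to guarantee $|\mathbf{v}|^2\neq0$. This is precisely what forces the hypothesis that $\mathbf{v}$ be \emph{non-zero}, and it is where the Euclidean (rather than merely bilinear or degenerate) nature of the form enters in an essential way. Once $|\mathbf{v}|^2\neq0$ is secured, the invertibility is an immediate one-line consequence of (A6) and the algebra axioms.
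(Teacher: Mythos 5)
Your proof is correct and follows essentially the same route as the paper's: reduce $\mathbf{v}\,\mathbf{v}^{-1}$ to $\frac{1}{|\mathbf{v}|^2}(\mathbf{v}\mathbf{v})=1$ using (A3), associativity, and (A6). You are in fact slightly more thorough than the paper, which checks only the one-sided product and leaves implicit both the positive-definiteness argument for $|\mathbf{v}|^2\neq 0$ and the justification (via A4) that $\mathbf{v}^{-1}$ is again a vector.
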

\begin{proof}
If $|\mathbf{v}|\ne 0$, then
$\displaystyle
\mathbf{v} \mathbf{v}^{-1} 
=
\mathbf{v} \frac{1}{|\mathbf{v}|^2} \mathbf{v} 
= 
\frac{1}{|\mathbf{v}|^2} \mathbf{v}\mathbf{v} 
= 1
$.
\end{proof}
\begin{remark}
By rule A3, notations like $\displaystyle \frac{\mathbf{v}}{|\mathbf{v}|}$ or $\displaystyle \frac{\mathbf{v}}{|\mathbf{v}|^2}$ are unambiguous. So, we can write~$\displaystyle \mathbf{v}^{-1} = \frac{\mathbf{v}}{|\mathbf{v}|^2}$.
\end{remark}
Axioms allow to clarify the nature of a particular symmetric poly-vector. 
\begin{proposition*}
The poly-vector $\displaystyle \frac{1}{2}(\mathbf{u}\mathbf{v}+\mathbf{v}\mathbf{u})$ is a number, whatever are the vectors $\mathbf{u}, \mathbf{v} \in\mathbb{E}_2$.
\end{proposition*}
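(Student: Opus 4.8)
The plan is to produce a scalar by polarizing axiom (A6), since that is the only axiom that directly converts a geometric product into a number: it asserts that $\mathbf{w}^2=|\mathbf{w}|^2$ is a scalar for every $\mathbf{w}\in\mathbb{E}_2$. Because $\mathbb{E}_2$ is a vector space, the sum $\mathbf{u}+\mathbf{v}$ is again a vector of $\mathbb{E}_2$, so I would apply (A6) to $\mathbf{w}=\mathbf{u}+\mathbf{v}$ and compare the result with the termwise expansion of the product.

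First I would expand $(\mathbf{u}+\mathbf{v})^2$ using the associativity of the algebra together with (A4), which identifies the three additions and the three multiplications, so that distributivity of the geometric product over addition is available. This gives $(\mathbf{u}+\mathbf{v})(\mathbf{u}+\mathbf{v}) = \mathbf{u}\mathbf{u}+\mathbf{u}\mathbf{v}+\mathbf{v}\mathbf{u}+\mathbf{v}\mathbf{v}$. By (A6) the left-hand side and the two pure square terms are numbers, namely $|\mathbf{u}+\mathbf{v}|^2$, $|\mathbf{u}|^2$, and $|\mathbf{v}|^2$. Rearranging then yields the polarization identity
\[
\mathbf{u}\mathbf{v}+\mathbf{v}\mathbf{u} = |\mathbf{u}+\mathbf{v}|^2-|\mathbf{u}|^2-|\mathbf{v}|^2 .
\]

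The decisive step is the reading of the right-hand side: it is a combination of three real numbers, so by (A1) and (A4) (scalars are degree-$0$ poly-vectors and their sums remain in $\mathbb{R}$) it is itself a scalar. Dividing by $2$ shows that $\tfrac12(\mathbf{u}\mathbf{v}+\mathbf{v}\mathbf{u})$ is a number; expanding $|\mathbf{u}+\mathbf{v}|^2$ by the bilinearity of the scalar product identifies it, as expected, with $\mathbf{u}\cdot\mathbf{v}$.

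I do not expect a hard computation here; the only points to handle with care are justifying the formal expansion — distributivity, which is part of the algebra structure rather than an extra hypothesis — and checking that applying (A6) to $\mathbf{u}+\mathbf{v}$ remains legitimate in the degenerate case $\mathbf{u}+\mathbf{v}=\mathbf{0}$, where the identity still returns the scalar $0$ (consistent with $\mathbf{0}$ coinciding with the zero scalar by the footnote to (A4)). This degenerate case is the only place where one might worry that the argument implicitly assumes $\mathbf{u}+\mathbf{v}$ is a genuine degree-$1$ vector, so I would note explicitly that the conclusion survives it.
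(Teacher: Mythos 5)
Your proof is correct and is essentially the paper's own argument: both apply axiom (A6) to $\mathbf{u}+\mathbf{v}$, expand $(\mathbf{u}+\mathbf{v})^2$ by distributivity, and rearrange to get $\mathbf{u}\mathbf{v}+\mathbf{v}\mathbf{u}=|\mathbf{u}+\mathbf{v}|^2-|\mathbf{u}|^2-|\mathbf{v}|^2\in\mathbb{R}$. Your added remark on the degenerate case $\mathbf{u}+\mathbf{v}=\mathbf{0}$ (handled by the footnote identifying $\mathbf{0}$ with the zero scalar) is a small extra precaution the paper leaves implicit, but it does not change the route.
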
 
\begin{proof} By axiom A6, we have that $(\mathbf{u}+\mathbf{v})^2=|\mathbf{u}+\mathbf{v}|^2$ is a scalar. Moreover, 
\[
(\mathbf{u}+\mathbf{v})^2 
=
(\mathbf{u}+\mathbf{v})(\mathbf{u}+\mathbf{v}) 
=
\mathbf{u}\mathbf{u} + \mathbf{u}\mathbf{v} + \mathbf{v}\mathbf{u} + \mathbf{v}\mathbf{v} 
= 
|\mathbf{u}|^2 + \mathbf{u}\mathbf{v} + \mathbf{v}\mathbf{u} + |\mathbf{v}|^2\ .
\]
As also $|\mathbf{u}|^2$ and $|\mathbf{v}|^2$ are scalars, then we have that 
\begin{center}
$ 
\mathbf{u}\mathbf{v} + \mathbf{v}\mathbf{u} = (\mathbf{u}+\mathbf{v})^2  -|\mathbf{u}|^2 - |\mathbf{v}|^2 \in\mathbb{R}
$
\end{center}
\end{proof}
\begin{remarkn}\label{rmk:quasi-commutativity}
You can verify that the form $\beta$, corresponding to the foregoing poly-vector 
$
\beta(\mathbf{u},\mathbf{v})
= 
\frac{1}{2}(\mathbf{u}\mathbf{v}+\mathbf{v}\mathbf{u})
$,
is bilinear, symmetric and coincide with the Euclidean scalar product 
$\mathbf{u}\cdot\mathbf{v}$, that is 
$
\displaystyle \frac{1}{2}(\mathbf{u}\mathbf{v}+\mathbf{v}\mathbf{u})
=
\mathbf{u}\cdot\mathbf{v}
$, for each couple of vectors $\mathbf{u},\mathbf{v}\in\mathbb{E}_2$.
This allows to control the non-commutativity of the geometric product between vectors $\mathbf{u},\mathbf{v}$ in $\mathbb{E}_2$ as follows
\begin{center}
$
\mathbf{v}\mathbf{u}
=
2(\mathbf{u}\cdot\mathbf{v})-\mathbf{u}\mathbf{v}
$.
\end{center}
\end{remarkn}
As 
\begin{center}
$\displaystyle 
\mathbf{u}\mathbf{v} 
= 
\frac{1}{2}(\mathbf{u}\mathbf{v}+\mathbf{v}\mathbf{u}) 
+
\frac{1}{2}(\mathbf{u}\mathbf{v}-\mathbf{v}\mathbf{u}) 
=
(\mathbf{u}\cdot\mathbf{v}) 
+
\frac{1}{2}(\mathbf{u}\mathbf{v}-\mathbf{v}\mathbf{u})
$,
\end{center}
then geometric product will be completely explained once the nature of the poly-vector
\begin{center}
$\displaystyle
\mathbf{u}\wedge \mathbf{v} 
=
\frac{1}{2}(\mathbf{u}\mathbf{v}-\mathbf{v}\mathbf{u})
$
\end{center}
is clarified. 

We first notice that $\mathbf{u}\wedge\mathbf{u} = 0$, $\mathbf{u}\wedge\mathbf{v} =  -\mathbf{v}\wedge\mathbf{u}$, and $\mathbf{u}\wedge\mathbf{v}$ is bilinear, that is 
\begin{center}
$
(\alpha\mathbf{u})\wedge\mathbf{v} 
= 
\mathbf{u}\wedge(\alpha\mathbf{v}) 
=\alpha (\mathbf{u}\wedge\mathbf{v})
$, and
$\mathbf{u}\wedge(\mathbf{v}+\mathbf{w}) 
= 
(\mathbf{u}\wedge\mathbf{v}) + (\mathbf{u}\wedge\mathbf{w})
$
\end{center}
for every scalar $\alpha$, and vectors $\mathbf{u}, \mathbf{v}$, and $\mathbf{w}$ in $\mathbb{E}_2$.
 
In order to investigate the nature of the multivector $\mathbf{u}\wedge\mathbf{v}$,  let us observe that, if $\{\mathbf{g}_1,\mathbf{g}_2\}$ is an orthogonal basis for $\mathbb{E}_2$, $\mathbf{u} = \mu_1\mathbf{g}_1+ \mu_2\mathbf{g}_2$, and $\mathbf{v} = \nu_1\mathbf{g}_1+ \nu_2\mathbf{g}_2$, then
\begin{equation}
\label{eq:wedge1}
\mathbf{g}_1\cdot\mathbf{g}_2 = 0
\ \ ,\ \ 		
\mathbf{g}_1\wedge\mathbf{g}_2 
= 
\mathbf{g}_1\textbf{g}_2
\ \ ,\ \ 		
\textrm{ and }\ 
\mathbf{u}\wedge\mathbf{v} 
= 
\det
\left(
\begin{array}{cc}
\mu_1 & \mu_2 \\
\nu_1 & \nu_2 
\end{array}
\right)
\mathbf{g}_1\mathbf{g}_2
\end{equation}
We now prove that the dimension of the associative vector algebra $\mathbb{G}_2$ is $4$.
\begin{proposition}\label{prop:dimension}
If $\{\mathbf{g}_1,\mathbf{g}_2\}$ is an orthogonal basis of $\mathbb{E}_2$, then 
$\{1 ,\ \mathbf{g}_1 ,\ \mathbf{g}_2 ,\ \mathbf{g}_1\mathbf{g}_2\}$
is a basis of~$\mathbb{G}_2$.
\end{proposition}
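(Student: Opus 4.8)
The plan is to establish the two defining properties of a basis separately: that the four poly-vectors span $\mathbb{G}_2$, and that they are linearly independent over $\mathbb{R}$. Throughout I would write $p=|\mathbf{g}_1|^2$ and $q=|\mathbf{g}_2|^2$ (both positive, by positive-definiteness), and record the two structural identities that drive everything. By axiom~(A6) one has $\mathbf{g}_1^2=p$ and $\mathbf{g}_2^2=q$, while the symmetric-part identity $\frac{1}{2}(\mathbf{g}_1\mathbf{g}_2+\mathbf{g}_2\mathbf{g}_1)=\mathbf{g}_1\cdot\mathbf{g}_2=0$ from Remark~\ref{rmk:quasi-commutativity} (orthogonality of the $\mathbf{g}_i$) gives the anticommutation rule $\mathbf{g}_2\mathbf{g}_1=-\mathbf{g}_1\mathbf{g}_2$.

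For the spanning step I would argue by a normal-form reduction. Every element of $\mathbb{G}_2$ is, by definition, a polynomial in vectors of $\mathbb{E}_2$; expanding each vector factor in the basis $\{\mathbf{g}_1,\mathbf{g}_2\}$ and distributing the (associative) product, every such element becomes a finite $\mathbb{R}$-linear combination of words $\mathbf{g}_{i_1}\cdots\mathbf{g}_{i_k}$ with each $i_j\in\{1,2\}$, the empty word being $1=\mathbf{v}^0$ by~(A1). Using $\mathbf{g}_2\mathbf{g}_1=-\mathbf{g}_1\mathbf{g}_2$ repeatedly I can sort any word into the form $\pm\,\mathbf{g}_1^{m}\mathbf{g}_2^{n}$, and then $\mathbf{g}_1^2=p$ and $\mathbf{g}_2^2=q$ collapse the two exponents modulo $2$, so each word is a scalar multiple of one of $1,\mathbf{g}_1,\mathbf{g}_2,\mathbf{g}_1\mathbf{g}_2$. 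Hence these four poly-vectors span $\mathbb{G}_2$, and in particular $\dim\mathbb{G}_2\le 4$.

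The independence is the delicate point, and I expect it to be the main obstacle: axiom~(A5) only guarantees independence of a scalar and a single vector, whereas here a genuine bivector $\mathbf{g}_1\mathbf{g}_2$ also occurs, so one must rule out an accidental collapse such as $\mathbf{g}_1\mathbf{g}_2$ lying in the span of $1,\mathbf{g}_1,\mathbf{g}_2$. My plan is to reduce a four-term relation to two instances of~(A5) by conjugating with an invertible vector. Suppose $\alpha+\beta\mathbf{g}_1+\gamma\mathbf{g}_2+\delta\,\mathbf{g}_1\mathbf{g}_2=0$ with $\alpha,\beta,\gamma,\delta\in\mathbb{R}$. Multiplying on the left by $\mathbf{g}_1$ and on the right by $\mathbf{g}_1^{-1}=\mathbf{g}_1/p$ (using the invertibility of non-zero vectors proved above) and simplifying with the two identities yields $\mathbf{g}_1\mathbf{g}_2\mathbf{g}_1^{-1}=-\mathbf{g}_2$ and $\mathbf{g}_1(\mathbf{g}_1\mathbf{g}_2)\mathbf{g}_1^{-1}=-\mathbf{g}_1\mathbf{g}_2$, so the conjugated relation reads $\alpha+\beta\mathbf{g}_1-\gamma\mathbf{g}_2-\delta\,\mathbf{g}_1\mathbf{g}_2=0$. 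Adding and subtracting the two relations separates the problem into $\alpha+\beta\mathbf{g}_1=0$ and $(\gamma+\delta\mathbf{g}_1)\mathbf{g}_2=0$. The first gives $\alpha=\beta=0$ directly by~(A5); right-multiplying the second by $\mathbf{g}_2^{-1}$ gives $\gamma+\delta\mathbf{g}_1=0$, whence $\gamma=\delta=0$ by~(A5) again. This proves linear independence, and together with the spanning step shows that $\{1,\mathbf{g}_1,\mathbf{g}_2,\mathbf{g}_1\mathbf{g}_2\}$ is a basis of $\mathbb{G}_2$.
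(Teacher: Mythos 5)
Your proof is correct and takes essentially the same route as the paper's: the identical normal-form reduction (anticommute, then collapse squares to scalars) for spanning, and conjugation of the four-term relation by $\mathbf{g}_1$ followed by axiom (A5) for independence. The only difference is cosmetic: where the paper performs a second conjugation by $\mathbf{g}_2$ to isolate the third coefficient, you factor the remainder as $(\gamma+\delta\,\mathbf{g}_1)\mathbf{g}_2=0$ and cancel $\mathbf{g}_2$ on the right, which handles the last two coefficients in one application of (A5) and incidentally dispenses with the paper's preliminary observation that $\mathbf{g}_1\mathbf{g}_2\neq 0$.
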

\begin{proof}
Let us prove that 
$\{1\ ,\ \mathbf{g}_1\ ,\ \mathbf{g}_2\ ,\ \mathbf{g}_1\mathbf{g}_2\}$
generates $\mathbb{G}_2$.

Every element in $\mathbb{G}_2$ is a polynomial of vectors, that is a linear combination of product of vectors. As every vector is a linear combination of vectors $\mathbf{g}_1$, and $\mathbf{g}_2$, by the distributivity property, every product of vector can be written as a linear combination of product of vectors $\mathbf{g}_1$, and $\mathbf{g}_2$. As vectors $\mathbf{g}_1$, and $\mathbf{g}_2$ anti-commute ($\mathbf{g}_1\mathbf{g}_2=-\mathbf{g}_2\mathbf{g}_1$), every product of vectors $\mathbf{g}_1$, and $\mathbf{g}_2$ can be reduced to the product $\epsilon (\mathbf{g}_1)^h(\mathbf{g}_2)^k$ for some integer exponents $h,k\in\mathbb{N}$ and $\epsilon\in\{1,-1\}$. 
As $(\mathbf{g}_i)^l$ is a scalar if the integer exponent $l$ is even, and it is a multiple of vector $\mathbf{g}_i$ if $l$ is odd, then
every poly-vector can be rewritten as a linear combination of elements in $\{1\ ,\ \mathbf{g}_1\ ,\ \mathbf{g}_2\ ,\ \mathbf{g}_1\mathbf{g}_2\}$. This proves that $\{1\ ,\ \mathbf{g}_1\ ,\ \mathbf{g}_2\ ,\ \mathbf{g}_1\mathbf{g}_2\}$ generates $\mathbb{G}_2$. It remains to show that $1$, $\mathbf{g}_1$, $\mathbf{g}_2$, and $\mathbf{g}_1\mathbf{g}_2$ are linearly independent. This means that we have to show that, if 
\begin{equation}
\label{eq:lin indep}
\alpha_0 
+
\alpha_1\mathbf{g}_1 
+
\alpha_2\mathbf{g}_2 
+
\alpha_3 \mathbf{g}_1\mathbf{g}_2
= 0
\end{equation}
then necessarily $\alpha_0 = \alpha_1 = \alpha_2  = \alpha_3 = 0$.
First of all, let us observe that $\mathbf{g}_1\mathbf{g}_2\ne 0$. By contradiction, if $\mathbf{g}_1\mathbf{g}_2=0$, then by multiplying it from left by $(\mathbf{g}_1)^{-1}$ (recall that both vectors $\mathbf{g}_1$, and $\mathbf{g}_2$ are invertible), we would obtain $\mathbf{g}_2=0$, a contradiction with axiom A5.
Let us now multiply the expression~(\ref{eq:lin indep}) from left by $\mathbf{g}_1$, and from right by  $(\mathbf{g}_1)^{-1}$. Then, we can write  the following equivalent relations
\begin{center}
$
\mathbf{g}_1(\alpha_0 + \alpha_1\mathbf{g}_1 +\alpha_2\mathbf{g}_2 + \alpha_3 \mathbf{g}_1\mathbf{g}_2)(\mathbf{g}_1)^{-1}= 0 
$

$
\mathbf{g}_1\alpha_0(\mathbf{g}_1)^{-1}
+ \mathbf{g}_1\alpha_1\mathbf{g}_1(\mathbf{g}_1)^{-1}
+ \mathbf{g}_1\alpha_2\mathbf{g}_2(\mathbf{g}_1)^{-1}
+ \mathbf{g}_1\alpha_3 \mathbf{g}_1\mathbf{g}_2(\mathbf{g}_1)^{-1}= 0 
$

$
\alpha_0
+ \alpha_1\mathbf{g}_1
+ \alpha_2\mathbf{g}_1\mathbf{g}_2(\mathbf{g}_1)^{-1}
+ \alpha_3 \mathbf{g}_1\mathbf{g}_1\mathbf{g}_2(\mathbf{g}_1)^{-1}= 0 
$

$
\alpha_0
+ \alpha_1\mathbf{g}_1
- \alpha_2\mathbf{g}_2\mathbf{g}_1(\mathbf{g}_1)^{-1}
- \alpha_3 \mathbf{g}_1\mathbf{g}_2\mathbf{g}_1(\mathbf{g}_1)^{-1}= 0 
$

$
\alpha_0
+ \alpha_1\mathbf{g}_1
- \alpha_2\mathbf{g}_2
- \alpha_3 \mathbf{g}_1\mathbf{g}_2= 0 
$
\end{center}
By summing the last relation with~(\ref{eq:lin indep}), and then dividing the result by two,  we obtain that 
\begin{center}
$
\alpha_0
+ \alpha_1\mathbf{g}_1
= 0 
$
\end{center}
Rule A5 implies that $\alpha_0 = \alpha_1= 0$. So, the initial expression~(\ref{eq:lin indep}) reduces to
\begin{equation}
\label{eq:lin indep 2}
\alpha_2\mathbf{g}_2 
+
\alpha_3 \mathbf{g}_1\mathbf{g}_2
= 0
\end{equation}
Let us multiply the foregoing expression from left by $\mathbf{g}_2$, and from right by  $(\mathbf{g}_2)^{-1}$.
Then, we can write  the following equivalent relations 
\begin{center}
$
\mathbf{g}_2\alpha_2\mathbf{g}_2 (\mathbf{g}_2)^{-1}
+
\mathbf{g}_2\alpha_3 \mathbf{g}_1\mathbf{g}_2(\mathbf{g}_2)^{-1}
= 0
$

$
\alpha_2\mathbf{g}_2
+
\alpha_3 \mathbf{g}_2\mathbf{g}_1
= 0
$

$
\alpha_2\mathbf{g}_2
-
\alpha_3 \mathbf{g}_1 \mathbf{g}_2
= 0
$
\end{center}
As before, by summing the last relation with~(\ref{eq:lin indep 2}),  and then dividing the result by two,  we obtain that $\alpha_2=0$. So, we end to the expression $\alpha_3 \mathbf{g}\mathbf{g}_2= 0$. By multiplying it from left by $(\mathbf{g}_2)^{-1}(\mathbf{g}_1)^{-1}$, we obtain $\alpha_3=0$.

\end{proof}
The foregoing result and~(\ref{eq:wedge1}) imply that $\mathbf{u}\wedge\mathbf{v} = 0$ if and only if $\mathbf{u}$ and $\mathbf{v}$ are linearly dependent. 
We can also notice that the geometric product of two orthonormal vectors does not depend on those particular factors, but only on their order. 
\begin{proposition*}
If the basis $\{\mathbf{e}_1,\mathbf{e}_2\}$ for $\mathbb{E}_2$ is not only orthogonal, but also orthonormal \big(that is, $(\mathbf{e}_1)^2=(\mathbf{e}_2)^2=1$\big), then the geometric product $\mathbf{e}_1\mathbf{e}_2\in\mathbb{G}_2$ does not depend on the particular orthonormal basis $\{\mathbf{e}_1\ ,\ \mathbf{e}_2\}$, but only on its orientation (that is, on the order the two vectors of the basis are given). 
\end{proposition*}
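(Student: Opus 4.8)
The plan is to reduce the statement to the determinant formula already established in~(\ref{eq:wedge1}). First I would take two orthonormal bases $\{\mathbf{e}_1,\mathbf{e}_2\}$ and $\{\mathbf{f}_1,\mathbf{f}_2\}$ of $\mathbb{E}_2$ and observe that, since an orthonormal basis is in particular orthogonal, the middle identity in~(\ref{eq:wedge1}) gives $\mathbf{e}_1\mathbf{e}_2=\mathbf{e}_1\wedge\mathbf{e}_2$ and $\mathbf{f}_1\mathbf{f}_2=\mathbf{f}_1\wedge\mathbf{f}_2$. In this way the claim becomes a purely algebraic statement about the alternating form $\wedge$, and the non-commutative geometric products $\mathbf{e}_1\mathbf{e}_2$ and $\mathbf{f}_1\mathbf{f}_2$ disappear from view.

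Next I would write the second basis in coordinates with respect to the first, say $\mathbf{f}_1=\phi_1\mathbf{e}_1+\phi_2\mathbf{e}_2$ and $\mathbf{f}_2=\psi_1\mathbf{e}_1+\psi_2\mathbf{e}_2$, and apply the last identity of~(\ref{eq:wedge1}), taking $\{\mathbf{e}_1,\mathbf{e}_2\}$ itself as the reference orthogonal basis. This yields at once
\[
\mathbf{f}_1\wedge\mathbf{f}_2
=
\det
\begin{pmatrix}
\phi_1 & \phi_2\\
\psi_1 & \psi_2
\end{pmatrix}
\mathbf{e}_1\mathbf{e}_2,
\]
so that everything now hinges on the determinant of the change-of-basis matrix between the two orthonormal frames.

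The key step is to identify this determinant. Since the matrix carrying one orthonormal basis to another is an orthogonal matrix, its determinant equals $\pm1$; and ``same orientation'' (the hypothesis, understood as the coherent choice of order of the basis vectors) is precisely the condition that this determinant be $+1$. Substituting the value $1$ into the displayed identity gives $\mathbf{f}_1\mathbf{f}_2=\mathbf{f}_1\wedge\mathbf{f}_2=\mathbf{e}_1\mathbf{e}_2$, which is exactly the assertion.

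The only genuinely delicate point is the identification of ``same orientation'' with the sign $+1$ of this determinant; once orientation is defined as the sign of the determinant of the change-of-basis matrix, the rest follows automatically from~(\ref{eq:wedge1}). As a concrete alternative one could parametrize an orientation-preserving change of orthonormal basis by a rotation angle $\theta$, expand $\mathbf{f}_1\mathbf{f}_2$ directly using $(\mathbf{e}_i)^2=1$ and $\mathbf{e}_1\mathbf{e}_2=-\mathbf{e}_2\mathbf{e}_1$, and collect the coefficient $\cos^2\theta+\sin^2\theta=1$ in front of $\mathbf{e}_1\mathbf{e}_2$; but the determinant route via~(\ref{eq:wedge1}) bypasses this trigonometric computation entirely.
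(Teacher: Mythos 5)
Your proof is correct and takes essentially the same route as the paper's: both express the second orthonormal basis in coordinates with respect to the first, invoke the determinant identity~(\ref{eq:wedge1}), and use the fact that the change-of-basis matrix is orthogonal, so its determinant is $\pm1$, with $+1$ corresponding to equal orientation. Your write-up merely makes explicit two points the paper leaves implicit, namely the reduction $\mathbf{f}_1\mathbf{f}_2=\mathbf{f}_1\wedge\mathbf{f}_2$ via orthogonality and the identification of ``same orientation'' with determinant $+1$.
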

\begin{proof}
If $\{\mathbf{e}'_1,\mathbf{e}'_2\}$ is any other orthonormal basis for $\mathbb{E}_2$, $\mathbf{e}'_1 = \epsilon_{1,1}\mathbf{e}_1+ \epsilon_{1,2}\mathbf{e}_2$ and $\mathbf{e}'_2 = \epsilon_{2,1}\mathbf{e}_1 + \epsilon_{2,2}\mathbf{e}_2$, then 
\begin{center}
$
\displaystyle 
\mathbf{e}'_1\mathbf{e}'_2 
= 
\det
\left(
\begin{array}{cc}
	\epsilon_{1,1} & \epsilon_{1,2}\\
	\epsilon_{2,1} & \epsilon_{2,2}
\end{array}
\right)
\mathbf{e}_1\mathbf{e}_2 
=
\pm \mathbf{e}_1\mathbf{e}_2
$
\end{center}
as 
$
\left(
\begin{array}{cc}
	\epsilon_{1,1} & \epsilon_{1,2}\\
	\epsilon_{2,1} & \epsilon_{2,2}
\end{array}
\right)
$ is an orthogonal matrix.
\end{proof}
That is why we can denote the multivector $\mathbf{e}_1\mathbf{e}_2=I_2$, and call it an \textit{orientation} of $\mathbb{E}_2$. 
So, by Proposition~\ref{prop:dimension}, the basic multivectors in $\mathbb{G}_2$ are scalars, vectors, and multiples of orientations.
You can also notice that
\begin{center}
$
(I_2)^2
= 
I_2 I_2 
=
\mathbf{e}_1\mathbf{e}_2 \mathbf{e}_1\mathbf{e}_2
=
\mathbf{e}_1(\mathbf{e}_2\mathbf{e}_1)\mathbf{e}_2 
=
\mathbf{e}_1(-\mathbf{e}_1\mathbf{e}_2)\mathbf{e}_2 
=
-\mathbf{e}_1\mathbf{e}_1\mathbf{e}_2\mathbf{e}_2
=
-(\mathbf{e}_1\mathbf{e}_1)(\mathbf{e}_2\mathbf{e}_2)
=
-1
$
\end{center}
So, we have that $I_2$ is invertible in $\mathbb{G}_2$ and $(I_2)^{-1}=-I_2$. Doesn't $I_2$ strongly resemble the imaginary unit ?

Finally, we can write the geometric product between two vectors $\mathbf{u},\mathbf{v}\in\mathbb{E}_2$ in terms of their coordinates with respect to any orthonormal basis $\{\mathbf{e}_1,\mathbf{e}_2\}$ of $\mathbb{E}_2$,
\begin{align*}
\mathbf{u}\mathbf{v}
& =
(\mathbf{u}\cdot\mathbf{v})
+ 
\mathbf{u}\wedge \mathbf{v} 
=
(\mu_1\nu_1 + \mu_2\nu_2) 
+
\det
\left(
\begin{array}{cc}
	\mu_1 & \mu_2\\
	\nu_1 & \nu_2
\end{array}
\right)
I_2 \\ 
& = |\mathbf{u}|\ |\mathbf{v}| (\cos \theta + I_2 \sin \theta)
\end{align*}
where $\mathbf{u} = \mu_1\mathbf{e}_1+ \mu_2\mathbf{e}-2$, and $\mathbf{v} = \nu_1\mathbf{e}_1+ \nu_2\mathbf{e}_2$, angle $\theta$ is oriented in $\mathbb{E}_2$ so that vector $\mathbf{u}$ can rotate towards vector $\mathbf{v}$ spanning angle $\theta \in (0, \pi)$, provided $\mathbf{u}$ and $\mathbf{v}$ are linearly independent (otherwise, $\mathbf{u}\mathbf{v} = \mathbf{u}\cdot\mathbf{v}$). 
 
\subsection{Comparing vectors in $\mathbb{E}_2 \subset \mathbb{G}_2$ with complex numbers.}
You have probably remarked some similarities with complex numbers ($I_2$ algebraically behaves in $\mathbb{G}_2$ as the imaginary unit $i$ does in $\mathbb{C}$). However, you should notice that while $\mathbb{G}_2$ has real-dimension four, $\mathbb{C}$ has real-dimension two. That is, scalars, vectors and orientations are distinguished in $\mathbb{G}_2$, while in $\mathbb{C}$ they all collapse into the notion of "complex number", viewed as a two-dimensional real vector. In other words, the geometric product of two vectors in $\mathbb{E}_2$ is not itself a vector: it is the sum of a scalar and a multiple of the orientation $I_2$. Instead, the product of two complex numbers \big(viewed as vectors in $\mathbb{R}^2$\big) is still a complex number \big(i.e., a vector in $\mathbb{R}^2$\big). 
Moreover, $\mathbb{C}$ is a division algebra (every non-zero element is invertible), while $\mathbb{G}_2$ is not: $1+ \mathbf{e}_1$, and $1- \mathbf{e}_1$ are not invertible in $\mathbb{G}_2$, as $(1+ \mathbf{e}_1)(1- \mathbf{e}_1)=0$. 
The product in $\mathbb{C}$ is commutative, while the geometric product in $\mathbb{G}_2$ is not.
The geometric product in $\mathbb{G}_2$ is invariant by rotations in $\mathbb{E}_2$, while the product in $\mathbb{C}$ is not invariant by rotations in $\mathbb{R}^2$.

\subsection{The determinant of a $2\times 2$ real matrix viewed in $\mathbb{G}_2$.}
Let us notice that, for every vector $\mathbf{v}\in\mathbb{E}_2$, then
\begin{align*}
\mathbf{v} I_2 
& =
(\nu_1\mathbf{e}_1+ \nu_2\mathbf{e}_2)\mathbf{e}_1\mathbf{e}_2 
=
\nu_1\mathbf{e}_1\mathbf{e}_1\mathbf{e}_2 + \nu_2\mathbf{e}_2\mathbf{e}_1\mathbf{e}_2 
=
-
\nu_1\mathbf{e}_1\mathbf{e}_2\mathbf{e}_1 
-
\nu_2\mathbf{e}_1\mathbf{e}_2\mathbf{e}_2 \\
& =
-\mathbf{e}_1\mathbf{e}_2(\nu_1\mathbf{e}_1 + \nu_2\mathbf{e}_2) 
=
-I_2 \mathbf{v} = \nu_1\mathbf{e}_2- \nu_2\mathbf{e}_1\in\mathbb{E}_2
\end{align*}
which corresponds to the vector obtained by rotating vector $\mathbf{v}$ of a right angle according to the orientation $I_2=\mathbf{e}_1\mathbf{e}_2$, as illustrated below,
\definecolor{qqwuqq}{rgb}{0,0,0}
\begin{center}
\begin{tikzpicture}[line cap=round,line join=round,x=0.8cm,y=0.8cm]
\clip(-1.1343801652892544,-2.1) rectangle (7.2,2.2);
\draw[line width=1.pt,color=qqwuqq,fill=qqwuqq,fill opacity=0.10000000149011612] (0.31361431340512835,0.15680715670256418) -- (0.1568071567025642,0.4704214701076926) -- (-0.15680715670256418,0.31361431340512835) -- (0.,0.) -- cycle; 
\draw[line width=1.pt,color=qqwuqq,fill=qqwuqq,fill opacity=0.10000000149011612] (5.156807156702564,-0.31361431340512835) -- (5.470421470107692,-0.15680715670256418) -- (5.313614313405129,0.1568071567025642) -- (5.,0.) -- cycle; 
\draw [->,line width=1.pt,-stealth] (0.,0.) -- (1.,0.);
\draw [->,line width=1.pt,-stealth] (0.,0.) -- (0.,1.);
\draw [->,line width=1.pt,-stealth] (0.,0.) -- (2.,1.);
\draw [->,line width=1.pt,-stealth] (0.,0.) -- (-1.,2.);
\draw [->,line width=1.pt,-stealth] (5.,0.) -- (6.,0.);
\draw [->,line width=1.pt,-stealth] (5.,0.) -- (5.,1.);
\draw [->,line width=1.pt,-stealth] (5.,0.) -- (7.,1.);
\draw [->,line width=1.pt,-stealth] (5.,0.) -- (6.,-2.);
\begin{scriptsize}
\draw[color=black] (1,-0.2) node {$\mathbf{e}_1$};
\draw[color=black] (0.25,0.9555371900826521) node {$\mathbf{e}_2$};
\draw[color=black] (1.9,0.7489256198347183) node {$\mathbf{v}$};
\draw[color=black] (-0.7128925619834716,2.046446280991743) node {$\mathbf{v}I_2$};
\draw[color=black] (6,-0.2) node {$\mathbf{e}_1$};
\draw[color=black] (5.25,0.9803305785124042) node {$\mathbf{e}_2$};
\draw[color=black] (6.9,0.7) node {$\mathbf{v}$};
\draw[color=black] (6.3,-1.7) node {$I_2\mathbf{v}$};
\end{scriptsize}
\end{tikzpicture}
\end{center}
Thanks to the foregoing results we can now proceed to show the two key facts that link the geometric product to the basic notion of secant plane.
\begin{proposition*}
The determinant of a 2x2 real matrix is a Clifford ratio in $\mathbb{G}_2$.
\end{proposition*}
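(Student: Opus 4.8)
The plan is to read the $2\times 2$ matrix through the two vectors encoded by its rows, and then to identify its determinant with the scalar coefficient that the wedge product of those vectors attaches to the orientation $I_2$.

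First I would fix an orthonormal basis $\{\mathbf{e}_1,\mathbf{e}_2\}$ of $\mathbb{E}_2$, so that $\mathbf{e}_1\mathbf{e}_2 = I_2$, and to a given matrix $M = \left(\begin{smallmatrix} \mu_1 & \mu_2 \\ \nu_1 & \nu_2 \end{smallmatrix}\right)$ I would associate the two vectors $\mathbf{u} = \mu_1\mathbf{e}_1 + \mu_2\mathbf{e}_2$ and $\mathbf{v} = \nu_1\mathbf{e}_1 + \nu_2\mathbf{e}_2$, whose coordinates are the rows of $M$. By formula~(\ref{eq:wedge1}), specialized to the orthonormal basis where $\mathbf{g}_1\mathbf{g}_2 = I_2$, the wedge product is exactly $\mathbf{u}\wedge\mathbf{v} = (\det M)\,I_2$.

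Next I would invoke the invertibility of $I_2$ established just above, namely $(I_2)^{-1} = -I_2$, in order to solve this relation for $\det M$. Multiplying on the right by $(I_2)^{-1}$ gives $\det M = (\mathbf{u}\wedge\mathbf{v})\,(I_2)^{-1} = -(\mathbf{u}\wedge\mathbf{v})\,I_2$, which exhibits the scalar $\det M$ as the geometric-product quotient of the oriented-area bivector $\mathbf{u}\wedge\mathbf{v}$ by the unit orientation $I_2 = \mathbf{e}_1\wedge\mathbf{e}_2$ — that is, as a Clifford ratio $\mathbf{A}\mathbf{B}^{-1}$ with $\mathbf{A} = \mathbf{u}\wedge\mathbf{v}$ and $\mathbf{B} = I_2$. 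If one prefers a formulation purely in terms of the geometric product of the original vectors, I would substitute $\mathbf{u}\wedge\mathbf{v} = \tfrac{1}{2}(\mathbf{u}\mathbf{v} - \mathbf{v}\mathbf{u})$ to obtain $\det M = \tfrac{1}{2}(\mathbf{u}\mathbf{v} - \mathbf{v}\mathbf{u})(I_2)^{-1}$.

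The one point requiring care — and the only real obstacle — is to confirm that this quotient is genuinely a scalar rather than a mixed multivector, so that its identification with the real number $\det M$ is legitimate. This follows because in $\mathbb{G}_2$ the bivectors form the one-dimensional subspace spanned by $I_2$ (Proposition~\ref{prop:dimension}), so that both $\mathbf{u}\wedge\mathbf{v}$ and $I_2$ are scalar multiples of the same basic multivector, and the product of $I_2$ with its inverse collapses to a scalar via $(I_2)^2 = -1$. I would make this explicit, so that the ratio of two parallel bivectors is seen to land in the scalar part of $\mathbb{G}_2$, which completes the identification.
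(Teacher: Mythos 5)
Your proposal is correct and takes essentially the same route as the paper: you read the rows of the matrix as vectors $\mathbf{u},\mathbf{v}$, invoke the identity $\mathbf{u}\wedge\mathbf{v}=(\det M)\,I_2$ from~(\ref{eq:wedge1}), and right-multiply by $(I_2)^{-1}$ to obtain $\det M=(\mathbf{u}\wedge\mathbf{v})(I_2)^{-1}$, exactly as in the paper's proof. Your closing scalarity check is harmless but redundant: since the relation $\mathbf{u}\wedge\mathbf{v}=(\det M)\,I_2$ starts from the known real scalar $\det M$, right-multiplication by $(I_2)^{-1}$ already yields the equality of the quotient with that scalar, with no need to appeal to Proposition~\ref{prop:dimension}.
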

\begin{proof}
Let us consider the rows of a $2\times 2$ real matrix
\[
\left(
\begin{array}{cc}
		\mu_1 & \mu_2 \\
		\nu_1 & \nu_2 
\end{array}
\right)
\]
as the components of two vectors $\mathbf{u} = \mu_1\mathbf{e}_1+ \mu_2\mathbf{e}_2$, and $\mathbf{v} = \nu_1\mathbf{e}_1+ \nu_2\mathbf{e}_2$ in $\mathbb{E}_2$ with respect to some orthonormal basis $\{\mathbf{e}_1,\mathbf{e}_2\}$. 

As 
$
\mathbf{u}\wedge \mathbf{v} 
=
\det
\left(
\begin{array}{cc}
		\mu_1 & \mu_2 \\
		\nu_1 & \nu_2 
\end{array}\right)
\mathbf{e}_1\mathbf{e}_2 
=
\det
\left(
\begin{array}{cc}
		\mu_1 & \mu_2 \\
		\nu_1 & \nu_2 
\end{array}
\right)
I_2
$, then we can write
\[
\det
\left(
\begin{array}{cc}
		\mu_1 & \mu_2 \\
		\nu_1 & \nu_2 
\end{array}
\right)
=
(\mathbf{u}\wedge \mathbf{v})
(I_2)^{-1}
\ , \
\textrm{which is a ratio in } \mathbb{G}_2\ .
\]
\end{proof}
\begin{remark}
You can verify that $(\mathbf{u}\wedge \mathbf{v})(I_2)^{-1} = (I_2)^{-1}(\mathbf{u}\wedge \mathbf{v})$. So the expression  
$
\displaystyle
\det
\left(
\begin{array}{cc}
		\mu_1 & \mu_2 \\
		\nu_1 & \nu_2 
\end{array}
\right)
=
\frac{
\mathbf{u}\wedge \mathbf{v}}
{I_2}
$
for the determinant of a $2\times 2$ matrix is unambiguous. 
\end{remark}

\begin{proposition}\label{prop:det=scalar prod}
The determinant a 2x2 real matrix is a scalar product. 
\end{proposition}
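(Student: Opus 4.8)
The plan is to exhibit an explicit pair of vectors whose Euclidean scalar product equals the determinant, using the fact—established just above—that multiplication by the orientation $I_2$ turns a vector into another vector. First I would fix an orthonormal basis $\{\mathbf{e}_1,\mathbf{e}_2\}$ and, as in the previous proposition, read the two rows of the matrix as the vectors $\mathbf{u} = \mu_1\mathbf{e}_1 + \mu_2\mathbf{e}_2$ and $\mathbf{v} = \nu_1\mathbf{e}_1 + \nu_2\mathbf{e}_2$; write $\Delta := \mu_1\nu_2 - \mu_2\nu_1$ for the determinant. The observation that makes the statement plausible is that, by the computation $\mathbf{v}I_2 = -I_2\mathbf{v}\in\mathbb{E}_2$ recalled above, the poly-vector $\mathbf{v}I_2$ is again a genuine vector (the right-angle rotation of $\mathbf{v}$). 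Consequently the Euclidean scalar product $\mathbf{u}\cdot(\mathbf{v}I_2)$ of the two vectors $\mathbf{u}$ and $\mathbf{v}I_2$ is well defined, and the target identity I would aim for is $\Delta = \mathbf{u}\cdot(I_2\mathbf{v})$.

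The core of the argument is to expand the single product $\mathbf{u}(\mathbf{v}I_2)$ in two complementary ways. Treating $\mathbf{v}I_2$ as one vector and splitting the geometric product into its symmetric and antisymmetric parts gives
\[
\mathbf{u}(\mathbf{v}I_2) = \mathbf{u}\cdot(\mathbf{v}I_2) + \mathbf{u}\wedge(\mathbf{v}I_2).
\]
Using instead associativity together with the analogous splitting of $\mathbf{u}\mathbf{v}$ gives
\[
\mathbf{u}(\mathbf{v}I_2) = (\mathbf{u}\mathbf{v})I_2 = (\mathbf{u}\cdot\mathbf{v})I_2 + (\mathbf{u}\wedge\mathbf{v})I_2.
\]
By relation~(\ref{eq:wedge1}) we have $\mathbf{u}\wedge\mathbf{v} = \Delta\,I_2$, so that, using $(I_2)^2=-1$, the last summand becomes $(\mathbf{u}\wedge\mathbf{v})I_2 = \Delta\,(I_2)^2 = -\Delta$. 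Thus the second expansion has scalar (grade $0$) part equal to $-\Delta$ and orientation (grade $2$) part equal to $(\mathbf{u}\cdot\mathbf{v})I_2$.

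It then remains to compare grades. Since a scalar product is of grade $0$ while a wedge product is a multiple of the orientation $I_2$, hence of grade $2$, matching the grade-$0$ components of the two expansions forces $\mathbf{u}\cdot(\mathbf{v}I_2) = -\Delta$ (the grade-$2$ components give the incidental identity $\mathbf{u}\wedge(\mathbf{v}I_2) = (\mathbf{u}\cdot\mathbf{v})I_2$, which I would not need). Replacing $\mathbf{v}I_2$ by $-I_2\mathbf{v}$ then yields
\[
\det\left(\begin{array}{cc} \mu_1 & \mu_2 \\ \nu_1 & \nu_2 \end{array}\right) = \mathbf{u}\cdot(I_2\mathbf{v}),
\]
exhibiting the determinant as an honest Euclidean scalar product of two vectors of $\mathbb{E}_2$.

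I expect the only delicate point to be exactly this grade-separation step: because multiplication by $I_2$ interchanges grade $0$ and grade $2$ (up to the sign coming from $(I_2)^2=-1$), one must be careful to equate the grade-$0$ part of one side with the grade-$0$ part of the other, rather than matching the two summands termwise. A purely computational route—expanding $\mathbf{u}\cdot(I_2\mathbf{v})$ in coordinates and recovering $\mu_1\nu_2-\mu_2\nu_1=\Delta$ directly—is also available and can serve as a reassuring verification, though I would prefer the coordinate-free expansion above because it reuses the algebraic identities already in hand and isolates cleanly why the orientation $I_2$ is precisely what converts the antisymmetric wedge into a symmetric scalar product.
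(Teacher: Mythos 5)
Your argument is correct and lands on a formula equivalent to the paper's: you obtain $\det = \mathbf{u}\cdot(I_2\mathbf{v})$ where the paper proves $\det = (\mathbf{u}I_2)\cdot\mathbf{v}$; since $I_2$ anticommutes with vectors, these coincide (both expand to $\mu_1\nu_2-\mu_2\nu_1$). The organization, though, is genuinely different. The paper never compares components: it runs one unbroken chain of multivector identities, starting from the preceding proposition's $\det=(\mathbf{u}\wedge\mathbf{v})(I_2)^{-1}$, rewriting this as $\frac{1}{2}(\mathbf{v}\mathbf{u}-\mathbf{u}\mathbf{v})I_2$, then sliding $I_2$ past one vector via $\mathbf{v}I_2=-I_2\mathbf{v}$ so that the antisymmetric combination in $\mathbf{u},\mathbf{v}$ turns into the symmetric combination $\frac{1}{2}\big[\mathbf{v}(\mathbf{u}I_2)+(\mathbf{u}I_2)\mathbf{v}\big]$, which Remark~\ref{rmk:quasi-commutativity} identifies as the scalar product $(\mathbf{u}I_2)\cdot\mathbf{v}$. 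You instead evaluate the single product $\mathbf{u}(\mathbf{v}I_2)$ in two ways and equate scalar parts, so your route needs an ingredient the paper's chain avoids entirely: the linear independence of $\{1,\mathbf{e}_1,\mathbf{e}_2,I_2\}$ from Proposition~\ref{prop:dimension}, to legitimize the grade separation. You correctly flag this as the delicate step, and your use of it is sound, since $\mathbf{v}I_2$ is a genuine vector of $\mathbb{E}_2$ (so $\mathbf{u}\cdot(\mathbf{v}I_2)$ is well defined and scalar) while $\mathbf{u}\wedge(\mathbf{v}I_2)$ is a multiple of $I_2$ by~(\ref{eq:wedge1}) and bilinearity. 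What each approach buys: the paper's chain is shorter and rests only on the quasi-commutativity remark; yours isolates conceptually why multiplication by $I_2$ exchanges the roles of wedge and dot, and delivers the companion identity $\mathbf{u}\wedge(\mathbf{v}I_2)=(\mathbf{u}\cdot\mathbf{v})I_2$ as a free by-product. Incidentally, the paper's displayed computation contains a typo, $\frac{1}{2}\big[\mathbf{v}(\mathbf{u}I_2)+(\mathbf{u}I_2)\mathbf{u}\big]$ where $\frac{1}{2}\big[\mathbf{v}(\mathbf{u}I_2)+(\mathbf{u}I_2)\mathbf{v}\big]$ is meant, which your component-matching derivation sidesteps.
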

\begin{proof} By using the same assumptions of the foregoing proof, we have that
\begin{align*}
\det
\left(
\begin{array}{cc}
		\mu_1 & \mu_2 \\
		\nu_1 & \nu_2 
\end{array}
\right)
& =
(\mathbf{u}\wedge \mathbf{v})
(I_2)^{-1}
= 
-(\mathbf{u}\wedge \mathbf{v})I_2
=
(\mathbf{v}\wedge \mathbf{u})I_2
=
\frac{1}{2}(\mathbf{v}\mathbf{u}-\mathbf{u}\mathbf{v})I_2\\
& =
\frac{1}{2}(\mathbf{v}\mathbf{u}I_2-\mathbf{u}\mathbf{v}I_2)
=
\frac{1}{2}\big[\mathbf{v}(\mathbf{u}I_2)+(\mathbf{u}I_2)\mathbf{u}\big]
=
(\mathbf{u} I_2)\cdot\mathbf{v}\ .
\end{align*}
\end{proof}

\section{\kern-10pt The difference vector quotient of a secant plane.}
Let us reformulate the relations defining the plane secant the graph of a function.
In the one-variable case the equation of the line secant the graph of $g:I\to\mathbb{R}$ at points $\big(a,g(a)\big), \big(b,g(b)\big)\in I\times\mathbb{R}$ can be written, in the Cartesian $(x,z)$-plane, in two equivalent ways: (\ref{eq:secant one-var}) or
\[
\det
\left(
\begin{array}{cc}
	x-a & z-g(a)\\
	b-a	& g(b)-g(a)
\end{array}
\right)
=0\ .
\]
In the $(x,y,z)$-coordinate system, the equation of the plane secant the graph of $f$ at points $(\alpha_1,\alpha_2,f(\mathbf{a})), (\beta_1,\beta_2, f(\mathbf{b})), (\gamma_1,\gamma_2,f(\mathbf{c}))\in\mathbb{R}^3$ can be written as
\begin{equation}
\label{eq:equiv two dim coord}
\det
\left(
\begin{array}{ccc}
	x-\alpha_1 & y-\alpha_2 & z-f(\mathbf{a})\\
	\beta_1-\alpha_1	& \beta_2-\alpha_2 & f(\mathbf{b})-f(\mathbf{a}) \\
	\gamma_1-\alpha_1	& \gamma_2-\alpha_2 & f(\mathbf{c})-f(\mathbf{a}) 
\end{array}
\right)
=0
\end{equation}
where 
$\mathbf{a}=\alpha_1\mathbf{e}_1+\alpha_2\mathbf{e}_2$,
$\mathbf{b}=\beta_1\mathbf{e}_1+\beta_2\mathbf{e}_2$, 
$\mathbf{c}=\gamma_1\mathbf{e}_1+\gamma_2\mathbf{e}_2$ are vectors (that we also call points) in the two-dimensional domain $\Omega\subseteq\mathbb{E}_2$ of $f$, and $\{\mathbf{e}_1,\mathbf{e}_2\}$ is an orthonormal basis of $\mathbb{E}_2$. In the following, we show how to rewrite~(\ref{eq:equiv two dim coord}) in the $\mathbb{E}_2$-coordinate-free setting~(\ref{eq:equiv two dim}).
The following equivalences start from a Laplace expansion of the determinant~(\ref{eq:equiv two dim coord}). 
\begin{center}
$
\big(z-f(\mathbf{a}) \big) 
{\scriptsize
\det
\left(
\begin{array}{cc}
	\beta_1-\alpha_1 & \beta_2-\alpha_2\\
	\gamma_1-\alpha_1 & \gamma_2-\alpha_2
\end{array}
\right)} 
-\big(f(\mathbf{b})-f(\mathbf{a})\big) 
{\scriptsize
\det
\left(
\begin{array}{cc}
	x-\alpha_1 & y-\alpha_2\\
	\gamma_1-\alpha_1 & \gamma_2-\alpha_2
\end{array}
\right)}
+
+
\big(f(\mathbf{c})-f(\mathbf{a})\big)
{\scriptsize
\det
\left(
\begin{array}{cc}
	x-\alpha_1 & y-\alpha_2\\
	\beta_1-\alpha_1 & \beta_2-\alpha_2
\end{array}
\right)}
= 0 
$
\end{center}

\begin{center}
$
\big(z-f(\mathbf{a})\big) 
\big[(\mathbf{b}-\mathbf{a})\wedge(\mathbf{c}-\mathbf{a})\big] (I_2)^{-1} 
=
\big(f(\mathbf{b})
\kern-2pt-\kern-2pt
f(\mathbf{a})\big) 
\big[(\mathbf{x}-\mathbf{a})
\kern-2pt\wedge\kern-2pt
(\mathbf{c}-\mathbf{a})\big](I_2)^{-1}
-
\big(f(\mathbf{c})
\kern-2pt-\kern-2pt
f(\mathbf{a})\big)
\big[(\mathbf{x}-\mathbf{a})
\kern-2pt\wedge\kern-2pt
(\mathbf{b}-\mathbf{a})\big](I_2)^{-1}
$
\end{center}

\begin{center}
$
\big(z-f(\mathbf{a})\big)
\big[(\mathbf{b}-\mathbf{a})\wedge(\mathbf{c}-\mathbf{a})\big] 
=
\big(f(\mathbf{b})-f(\mathbf{a})\big)
\big[(\mathbf{x}-\mathbf{a})\wedge(\mathbf{c}-\mathbf{a})\big] 
-
\big(f(\mathbf{c})-f(\mathbf{a})\big)
\big[(\mathbf{x}-\mathbf{a})\wedge(\mathbf{b}-\mathbf{a})\big]
$
\end{center}

\begin{center}
$ 
z-f(\mathbf{a})
=
\big(f(\mathbf{b})-f(\mathbf{a})\big)
\big[(\mathbf{x}-\mathbf{a})\wedge(\mathbf{c}-\mathbf{a})\big]
\big[(\mathbf{b}-\mathbf{a})\wedge(\mathbf{c}-\mathbf{a})\big]^{-1}
+ -
\big(f(\mathbf{c})-f(\mathbf{a})\big)
\big[(\mathbf{x}-\mathbf{a})\wedge(\mathbf{b}-\mathbf{a})\big]
\big[(\mathbf{b}-\mathbf{a})\wedge(\mathbf{c}-\mathbf{a})\big]^{-1} 
$
\end{center}

\begin{center}
$
z 
=
f(\mathbf{a}) 
+
\big(f(\mathbf{b})-f(\mathbf{a})\big)
\big[(\mathbf{x}-\mathbf{a})\wedge(\mathbf{c}-\mathbf{a})\big]
\big[(\mathbf{b}-\mathbf{a})\wedge(\mathbf{c}-\mathbf{a})\big]^{-1}
+ -
\big(f(\mathbf{c})-f(\mathbf{a})\big)
\big[(\mathbf{x}-\mathbf{a})\wedge(\mathbf{b}-\mathbf{a})\big]
\big[(\mathbf{b}-\mathbf{a})\wedge(\mathbf{c}-\mathbf{a})\big]^{-1} 
$
\end{center}
\begin{align*}
\textrm{As }\ (\mathbf{b}-\mathbf{a})\wedge(\mathbf{c}-\mathbf{a})
& =
\mathbf{a}\wedge\mathbf{b}
+
\mathbf{b}\wedge\mathbf{c}
+
\mathbf{c}\wedge\mathbf{a}\\
& =
I_2 
\underbrace{
\left[
\scriptstyle
\det
\left(
\begin{array}{cc}
	\alpha_1 & \alpha_2\\
	\beta_1 & \beta_2
\end{array}
\right)
+
\det
\left(
\begin{array}{cc}
		\beta_1 & \beta_2\\
		\gamma_1 & \gamma_2
\end{array}
\right)
+
\det
\left(
\begin{array}{cc}
	\gamma_1 & \gamma_2\\
	\alpha_1 & \alpha_2
\end{array}
\right)
\right]}_{2\tau}\ ,
\end{align*}
where $\tau$ is simply the oriented\footnote{The sign of $\tau$ is positive if and only if the geometric ratio between $(\mathbf{b}-\mathbf{a})\wedge(\mathbf{c}-\mathbf{a})$ and $I_2$ is positive. See also~\cite{Braden86}.} area of the triangle having as vertices the points $\mathbf{a}$, $\mathbf{b}$, and $\mathbf{c}$), the foregoing equivalences, describing the secant plane, can continue as follows
\begin{center}
$
z 
=
f(\mathbf{a}) 
+
\frac{f(\mathbf{b})-f(\mathbf{a})}{2\tau}
\big[(\mathbf{x}-\mathbf{a})\wedge(\mathbf{c}-\mathbf{a})\big]
I_2^{-1}
-
\frac{f(\mathbf{c})-f(\mathbf{a})}{2\tau}
\big[(\mathbf{x}-\mathbf{a})\wedge(\mathbf{b}-\mathbf{a})\big]
I_2^{-1} 
$
\end{center}

\begin{center}
$
z 
=
f(\mathbf{a}) 
-
\frac{f(\mathbf{b})-f(\mathbf{a})}{2\tau}
\big[(\mathbf{c}-\mathbf{a})\wedge(\mathbf{x}-\mathbf{a})\big]
I_2^{-1}
+
\frac{f(\mathbf{c})-f(\mathbf{a})}{2\tau}
\big[(\mathbf{b}-\mathbf{a})\wedge(\mathbf{x}-\mathbf{a})\big]
I_2^{-1} 
$
\end{center}
By proposition~\ref{prop:det=scalar prod}, we can write
\begin{center}
$
z 
=
f(\mathbf{a}) 
-
\frac{f(\mathbf{b})-f(\mathbf{a})}{2\tau}
\Big\{\kern-2pt
\big[(\mathbf{c}-\mathbf{a})I_2\big] \cdot(\mathbf{x}-\mathbf{a})
\kern-2pt\Big\}
+
\frac{f(\mathbf{c})-f(\mathbf{a})}{2\tau}
\Big\{\kern-2pt
\big[(\mathbf{b}-\mathbf{a})I_2\big]\cdot(\mathbf{x}-\mathbf{a})
\kern-2pt\Big\}
$
\end{center}

\begin{center}
$
z 
=
f(\mathbf{a}) 
-
\Big\{\kern-2pt
\frac{f(\mathbf{b})-f(\mathbf{a})}{2\tau}
\big[(\mathbf{c}-\mathbf{a})I_2\big] 
-
\frac{f(\mathbf{c})-f(\mathbf{a})}{2\tau}
\big[(\mathbf{b}-\mathbf{a})I_2
\big]
\kern-2pt\Big\}
\cdot(\mathbf{x}-\mathbf{a})
$
\end{center}

\begin{center}
$
z 
=
f(\mathbf{a}) 
-
\frac{1}{2\tau}
\Big\{\kern-2pt
\Big[
\big(f(\mathbf{b})-f(\mathbf{a})\big)
(\mathbf{c}-\mathbf{a}) 
-
\big(f(\mathbf{c})-f(\mathbf{a})\big)
(\mathbf{b}-\mathbf{a})
\Big]I_2
\kern-2pt\Big\}
\cdot(\mathbf{x}-\mathbf{a})
$
\end{center}
This allows to explicitly write vector 
$
\mathbf{q}
=
\mathbf{q}_{(f,\mathbf{a},\mathbf{b},\mathbf{c})}
$
of expression~(\ref{eq:equiv two dim}) 
\begin{align*}
\mathbf{q}
& =
-
\frac{1}{2\tau}
\Big\{\kern-2pt
\Big[
\big(f(\mathbf{b})-f(\mathbf{a})\big)
(\mathbf{c}-\mathbf{a}) 
-
\big(f(\mathbf{c})-f(\mathbf{a})\big)
(\mathbf{b}-\mathbf{a})
\Big]I_2\\
&=
\Big[
\big(f(\mathbf{b})-f(\mathbf{a})\big)
(\mathbf{c}-\mathbf{a}) 
-
\big(f(\mathbf{c})-f(\mathbf{a})\big)
(\mathbf{b}-\mathbf{a})
\Big]
\big[(\mathbf{b}-\mathbf{a})\wedge(\mathbf{c}-\mathbf{a})\big]^{-1} \ ,
\end{align*}
and proves the following main result of this article.
\begin{theorem}\label{thm:main}
The plane secant the graph of a two-variable function $f:\Omega\subseteq\mathbb{E}_2\to\mathbb{R}$ at points 
$\big(\mathbf{a},f(\mathbf{a})\big)$, $\big(\mathbf{b},f(\mathbf{b})\big)$, and $\big(\mathbf{c},f(\mathbf{c})\big)$ \big(where $\mathbf{a}$, $\mathbf{b}$, and $\mathbf{c}$ are three non-collinear points in the domain $\Omega$ of $f$\big), can be represented in the $(\mathbf{v},z)$-space $\mathbb{E}_2\times\mathbb{R}$ by the relation
\[
z=f(\mathbf{a}) + \mathbf{q} \cdot (\mathbf{v}-\mathbf{a})\ ,
\]
where vector~$\mathbf{q}=\mathbf{q}_{(f,\mathbf{a},\mathbf{b},\mathbf{c})}$ is the geometric quotient in $\mathbb{G}_2$ between the vector
\begin{center}
$
\big(f(\mathbf{b})-f(\mathbf{a})\big)
(\mathbf{c}-\mathbf{a}) 
-
\big(f(\mathbf{c})-f(\mathbf{a})\big)
(\mathbf{b}-\mathbf{a})
$,
\end{center}
and the orientation
\begin{center}
$
(\mathbf{b}-\mathbf{a})\wedge(\mathbf{c}-\mathbf{a})
$.
\end{center}
\end{theorem}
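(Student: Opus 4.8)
The plan is to translate the classical coordinate equation of a plane through three points into the coordinate-free language of $\mathbb{G}_2$, step by step. I would start from the determinant relation~(\ref{eq:equiv two dim coord}) and perform a Laplace expansion along its last column. This produces three summands, each carrying a $2\times 2$ minor: the minor multiplying $z-f(\mathbf{a})$ is the determinant of the coordinate block of $\mathbf{b}-\mathbf{a}$ and $\mathbf{c}-\mathbf{a}$, while the minors multiplying $f(\mathbf{b})-f(\mathbf{a})$ and $f(\mathbf{c})-f(\mathbf{a})$ still contain the running variables $x,y$ (equivalently, the vector $\mathbf{x}-\mathbf{a}$).

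The second step is to feed every one of these minors into the dictionary already established. By the proposition expressing a $2\times 2$ determinant as a Clifford ratio, each minor equals $(\mathbf{u}\wedge\mathbf{v})(I_2)^{-1}$ for the appropriate pair of vectors, so the whole expanded relation becomes a $\mathbb{G}_2$-identity in which the factor $(I_2)^{-1}$ occurs in every term and can be cancelled. What remains is a relation written purely in terms of the exterior products $(\mathbf{b}-\mathbf{a})\wedge(\mathbf{c}-\mathbf{a})$, $(\mathbf{x}-\mathbf{a})\wedge(\mathbf{c}-\mathbf{a})$, and $(\mathbf{x}-\mathbf{a})\wedge(\mathbf{b}-\mathbf{a})$. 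Here the non-collinearity hypothesis enters decisively: by Proposition~\ref{prop:dimension} together with~(\ref{eq:wedge1}), the coefficient $(\mathbf{b}-\mathbf{a})\wedge(\mathbf{c}-\mathbf{a})$ is a nonzero multiple of $I_2$, hence invertible, so I can isolate $z-f(\mathbf{a})$ by right-multiplying through by $[(\mathbf{b}-\mathbf{a})\wedge(\mathbf{c}-\mathbf{a})]^{-1}$.

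The decisive step is to recognize the two surviving $(\mathbf{x}-\mathbf{a})$-terms as scalar products. After reordering each wedge by antisymmetry so that $(\mathbf{x}-\mathbf{a})$ occupies the right slot, I would apply Proposition~\ref{prop:det=scalar prod} in the form $(\mathbf{u}\wedge\mathbf{v})(I_2)^{-1}=(\mathbf{u}I_2)\cdot\mathbf{v}$, turning the two terms into $[(\mathbf{c}-\mathbf{a})I_2]\cdot(\mathbf{x}-\mathbf{a})$ and $[(\mathbf{b}-\mathbf{a})I_2]\cdot(\mathbf{x}-\mathbf{a})$. Factoring the common argument $(\mathbf{x}-\mathbf{a})$ out of both scalar products (using that scalars commute with vectors, A3) collects a single vector coefficient, which by inspection is exactly the claimed geometric quotient, bringing the equation into the desired form~(\ref{eq:equiv two dim}).

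The part requiring the most care is the bookkeeping of the non-commutative factors: the orders of the products must be tracked so that the inverse wedge $[(\mathbf{b}-\mathbf{a})\wedge(\mathbf{c}-\mathbf{a})]^{-1}$ and the $I_2$ furnished by Proposition~\ref{prop:det=scalar prod} end up in positions where they combine cleanly. I would also make a point of confirming that $\mathbf{q}$ is genuinely a vector and not a general multivector: since $(\mathbf{b}-\mathbf{a})\wedge(\mathbf{c}-\mathbf{a})=2\tau I_2$ and $(I_2)^{-1}=-I_2$, the quotient reduces to $-\tfrac{1}{2\tau}\big[(f(\mathbf{b})-f(\mathbf{a}))(\mathbf{c}-\mathbf{a})-(f(\mathbf{c})-f(\mathbf{a}))(\mathbf{b}-\mathbf{a})\big]I_2$, and right-multiplication by $I_2$ sends a vector to the vector obtained by a quarter-turn, so $\mathbf{q}\in\mathbb{E}_2$ and the scalar product $\mathbf{q}\cdot(\mathbf{v}-\mathbf{a})$ in the statement is legitimate.
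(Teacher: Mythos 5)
Your proposal is correct and takes essentially the same route as the paper's own derivation: Laplace expansion of the determinant~(\ref{eq:equiv two dim coord}) along its last column, conversion of each $2\times 2$ minor into a Clifford ratio, cancellation of the common factor $(I_2)^{-1}$, inversion of $(\mathbf{b}-\mathbf{a})\wedge(\mathbf{c}-\mathbf{a})$ justified by non-collinearity, reordering the wedges by antisymmetry, and Proposition~\ref{prop:det=scalar prod} to recast the two surviving terms as scalar products with $(\mathbf{x}-\mathbf{a})$ before factoring out the vector coefficient $\mathbf{q}$. Your final check that $\mathbf{q}$ is genuinely a vector, via $(\mathbf{b}-\mathbf{a})\wedge(\mathbf{c}-\mathbf{a})=2\tau I_2$, $(I_2)^{-1}=-I_2$, and the fact that right-multiplication by $I_2$ is a quarter-turn in $\mathbb{E}_2$, matches what the paper established earlier when computing $\mathbf{v}I_2=-I_2\mathbf{v}\in\mathbb{E}_2$.
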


\section{The vector quotient as a linear combination in~$\mathbb{E}_2$.}
Computations in $\mathbb{G}_2$ allow to explicitly express the foregoing difference vector quotient~$\mathbf{q}=\mathbf{q}_{(f,\mathbf{a},\mathbf{b},\mathbf{c})}$ as a linear combination of normals to the  triangle defined by $\mathbf{a}$, $\mathbf{b}$, and $\mathbf{c}$.

Let $
\mathbf{\partial_a}
=
\mathbf{c}-\mathbf{b}
$,
\ 
$
\mathbf{\partial_b}
=
\mathbf{a}-\mathbf{c}
$,
\ 
$
\mathbf{\partial_c}
=
\mathbf{b}-\mathbf{a}
$; so,
$
\mathbf{\partial_a}+\mathbf{\partial_b}+\mathbf{\partial_c}=0
$. 
Moreover 
\begin{center}
$
(\mathbf{b}-\mathbf{a})\wedge(\mathbf{c}-\mathbf{a})
=
\mathbf{\partial_a}\wedge\mathbf{\partial_b}
=
\mathbf{\partial_b}\wedge\mathbf{\partial_c}
=
\mathbf{\partial_c}\wedge\mathbf{\partial_a}
=
2\tau I_2 
$,
\end{center}
where $\tau$ is the signed area of the oriented triangle determined by the ordered points $\mathbf{a}$, $\mathbf{b}$, and~$\mathbf{c}$, whose sign depends on the orientation~$I_2$. Then,
\begin{align*}
&
\big(f(\mathbf{b})
\kern-3pt-\kern-3pt
f(\mathbf{a})\big)
(\mathbf{c}
\kern-3pt-\kern-3pt
\mathbf{a}) 
\kern-2pt-\kern-2pt
\big(f(\mathbf{c})
\kern-3pt-\kern-3pt
f(\mathbf{a})\big)
(\mathbf{b}
\kern-3pt-\kern-3pt
\mathbf{a})
=
\big(f(\mathbf{a})
\kern-3pt-\kern-3pt
f(\mathbf{b})\big)
\mathbf{\partial_b}
\kern-2pt-\kern-2pt
\big(f(\mathbf{c})
\kern-3pt-\kern-3pt
f(\mathbf{a})\big)
\mathbf{\partial_c}
\\
= & 
f(\mathbf{a})
\big[
\mathbf{\partial_b}
+
\mathbf{\partial_c}
\big]
- f(\mathbf{b})\mathbf{\partial_b}
- f(\mathbf{c})\mathbf{\partial_c}
=
-\big[
f(\mathbf{a})\mathbf{\partial_a}
+
f(\mathbf{b})\mathbf{\partial_b}
+
f(\mathbf{c})\mathbf{\partial_c}
\big]\\
= &
f(\mathbf{a})
\big(\mathbf{\partial_b}+\mathbf{\partial_c}\big)
+
f(\mathbf{b})
\big(\mathbf{\partial_a}+\mathbf{\partial_c}\big)
+
f(\mathbf{c})
\big(\mathbf{\partial_a}+\mathbf{\partial_b}\big)\\
= &
\big[f(\mathbf{a})+f(\mathbf{b})\big]
\mathbf{\partial_c}
+
\big[f(\mathbf{a})+f(\mathbf{c})\big]
\mathbf{\partial_b}
+
\big[f(\mathbf{b})+f(\mathbf{cb})\big]
\mathbf{\partial_a}
\end{align*}
So, we can write
\begin{align*}
\mathbf{q}
& =
\left\{
\big[f(\mathbf{a})+f(\mathbf{b})\big]
\mathbf{\partial_c}
+
\big[f(\mathbf{a})+f(\mathbf{c})\big]
\mathbf{\partial_b}
+
\big[f(\mathbf{b})+f(\mathbf{c})\big]
\mathbf{\partial_a}
\right\}
\big(2\tau I_2\big)^{-1}\\
& =
\frac{f(\mathbf{a})+f(\mathbf{b})}{2\tau}
\mathbf{\partial_c}(I_2)^{-1}
+
\frac{f(\mathbf{a})+f(\mathbf{c})}{2\tau}
\mathbf{\partial_b}(I_2)^{-1}
+
\frac{f(\mathbf{b})+f(\mathbf{c})}{2\tau}
\mathbf{\partial_a}(I_2)^{-1}\\
& =
\frac{f(\mathbf{a})+f(\mathbf{b})}{2\tau}
\underbrace{I_2\mathbf{\partial_c}}_{\mathbf{\partial^\bot_c}}
+
\frac{f(\mathbf{a})+f(\mathbf{c})}{2\tau}
\underbrace{I_2\mathbf{\partial_b}}_{\mathbf{\partial^\bot_b}}
+
\frac{f(\mathbf{b})+f(\mathbf{c})}{2\tau}
\underbrace{I_2\mathbf{\partial_a}}_{\mathbf{\partial^\bot_a}}\ .
\end{align*}
This proves our second theorem, which generalizes a lemma proved\footnote{Only for linear functions.} in~\cite{Daws2013}.
\begin{theorem}\label{thm:lin comb orth}
The plane secant the graph of a two-variable function $f:\Omega\subseteq\mathbb{E}_2\to\mathbb{R}$ at points 
$\big(\mathbf{a},f(\mathbf{a})\big)$, $\big(\mathbf{b},f(\mathbf{b})\big)$, and $\big(\mathbf{c},f(\mathbf{c})\big)$ \big(where $\mathbf{a}$, $\mathbf{b}$, and $\mathbf{c}$ are three non-collinear points in the domain $\Omega$ of $f$\big), can be represented in the $(\mathbf{v},z)$-space $\mathbb{E}_2\times\mathbb{R}$ by the relation 
$
z=f(\mathbf{a}) + \mathbf{q} \cdot (\mathbf{v}-\mathbf{a})
$, 
where vector~$\mathbf{q}=\mathbf{q}_{(f,\mathbf{a},\mathbf{b},\mathbf{c})}$ is the linear combination~(\ref{eq:lin comb orth}). 
\end{theorem}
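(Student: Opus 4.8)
The plan is to obtain the symmetric form~(\ref{eq:lin comb orth}) by a purely algebraic reduction of the geometric quotient already supplied by Theorem~\ref{thm:main}; no new geometry is needed, only manipulation inside $\mathbb{G}_2$ together with the facts established earlier. I would start from the closed form
\[
\mathbf{q}
=
\Big[\big(f(\mathbf{b})-f(\mathbf{a})\big)(\mathbf{c}-\mathbf{a}) - \big(f(\mathbf{c})-f(\mathbf{a})\big)(\mathbf{b}-\mathbf{a})\Big]\big[(\mathbf{b}-\mathbf{a})\wedge(\mathbf{c}-\mathbf{a})\big]^{-1},
\]
and rewrite the numerator using the edge vectors, observing that $\mathbf{c}-\mathbf{a}=-\mathbf{\partial_b}$ and $\mathbf{b}-\mathbf{a}=\mathbf{\partial_c}$. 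Expanding the products and grouping the three function values then collapses the numerator to the antisymmetric expression $-\big[f(\mathbf{a})\mathbf{\partial_a}+f(\mathbf{b})\mathbf{\partial_b}+f(\mathbf{c})\mathbf{\partial_c}\big]$, where the relation $\mathbf{\partial_a}+\mathbf{\partial_b}+\mathbf{\partial_c}=0$ is used once to absorb the term $f(\mathbf{a})(\mathbf{\partial_b}+\mathbf{\partial_c})$ into $-f(\mathbf{a})\mathbf{\partial_a}$.

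The decisive step is the symmetrization. Invoking $\mathbf{\partial_a}+\mathbf{\partial_b}+\mathbf{\partial_c}=0$ a second time, I would replace each $-\mathbf{\partial_a}$ by $\mathbf{\partial_b}+\mathbf{\partial_c}$ (and cyclically), converting the antisymmetric sum into the manifestly symmetric combination $[f(\mathbf{a})+f(\mathbf{b})]\mathbf{\partial_c}+[f(\mathbf{a})+f(\mathbf{c})]\mathbf{\partial_b}+[f(\mathbf{b})+f(\mathbf{c})]\mathbf{\partial_a}$. I expect this to be the main obstacle — not computationally, since it is short, but conceptually: it is exactly the linear dependence of the three edges that turns the asymmetric numerator inherited from the Laplace expansion into a form symmetric in the vertices, and it is this rewriting that delivers the full symmetry under permutations of $\mathbf{a},\mathbf{b},\mathbf{c}$ claimed for $\mathbf{q}$.

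It then remains to handle the denominator and to pass to outward normals. I would establish $(\mathbf{b}-\mathbf{a})\wedge(\mathbf{c}-\mathbf{a})=2\tau I_2$ by expanding the wedge via~(\ref{eq:wedge1}) into the three $2\times 2$ determinants whose sum is twice the signed triangle area $\tau$, so that the denominator becomes the orientation multiple $2\tau I_2$. Factoring $(2\tau I_2)^{-1}=\tfrac{1}{2\tau}(I_2)^{-1}$ through the symmetric sum, I would finally convert each $\mathbf{\partial}\,(I_2)^{-1}$ into the outward normal $\mathbf{\partial}^\bot=I_2\mathbf{\partial}$, using $(I_2)^{-1}=-I_2$ together with the anticommutation $\mathbf{v}I_2=-I_2\mathbf{v}$ proved earlier. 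Collecting the three terms reproduces~(\ref{eq:lin comb orth}) verbatim, which together with the representation $z=f(\mathbf{a})+\mathbf{q}\cdot(\mathbf{v}-\mathbf{a})$ from Theorem~\ref{thm:main} completes the proof.
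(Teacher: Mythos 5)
Your proposal is correct and follows essentially the same route as the paper's own proof: starting from the geometric quotient of Theorem~\ref{thm:main}, substituting the edge vectors $\mathbf{\partial_a},\mathbf{\partial_b},\mathbf{\partial_c}$, using $\mathbf{\partial_a}+\mathbf{\partial_b}+\mathbf{\partial_c}=0$ twice (once to collapse the numerator to $-\big[f(\mathbf{a})\mathbf{\partial_a}+f(\mathbf{b})\mathbf{\partial_b}+f(\mathbf{c})\mathbf{\partial_c}\big]$, once to symmetrize it), identifying the denominator as $2\tau I_2$, and converting each $\mathbf{\partial}(I_2)^{-1}$ into $I_2\mathbf{\partial}=\mathbf{\partial}^\bot$ via $(I_2)^{-1}=-I_2$ and the anticommutation $\mathbf{v}I_2=-I_2\mathbf{v}$. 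There is nothing to add; the argument is complete as proposed.
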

\begin{remark}
If $f$ is the affine function $f(\mathbf{x})=(\mathbf{v}\cdot \mathbf{x})+\phi$, for some $\mathbf{v}\in\mathbb{E}_2$ and $\phi\in\mathbb{R}$, then you can verify that $\mathbf{q}_{(f,\mathbf{a},\mathbf{b},\mathbf{c})}=\mathbf{v}$.
\end{remark}


\vfill\eject

\end{document}